\documentclass[preprint,12pt]{elsarticle}

\usepackage{amsmath,amssymb,amsthm}
\usepackage{mathtools}
\usepackage{xcolor,tikz}
\usepackage[hidelinks]{hyperref}

\theoremstyle{plain}
\newtheorem{theorem}{Theorem}[section]
\newtheorem{proposition}[theorem]{Proposition}
\newtheorem{lemma}[theorem]{Lemma}

\theoremstyle{definition}

\theoremstyle{remark}
\newtheorem{remark}[theorem]{Remark}

\newcommand{\normone}[1]{\lVert #1\rVert_1}

\journal{Discrete Applied Mathematics}

\begin{document}

\begin{frontmatter}

\title{The minimum length of an axis-aligned rectangular tiling of a flat torus}

\cortext[corr]{Corresponding author}
\author{Hau-Yi Lin\corref{corr}\fnref{nycu}}
\ead{zaq1bgt5cde3mju7@gmail.com}
\author{Wu-Hsiung Lin\fnref{nycu}}
\ead{wuhsiunglin@nctu.edu.tw}
\author{Gerard Jennhwa Chang\fnref{ntu}}
\ead{gjchang@math.ntu.edu.tw}
\address[nycu]{Department of Applied Mathematics, National Yang Ming Chiao Tung University, Hsinchu 30010, Taiwan}
\address[ntu]{Department of Mathematics, National Taiwan University, Taipei 10617, Taiwan}

\begin{abstract}
A flat torus is the quotient of the Euclidean plane over a lattice generated by a basis,
and an axis-aligned rectangular tiling of a flat torus
is a partition into finitely many rectangles whose sides are axis-aligned.
We provide the minimum sum of the perimeter of rectangles
for an axis-aligned rectangular tiling,
and prove that it is attainable by either exactly one rectangle or exactly two rectangles.
\end{abstract}

\begin{keyword}
Rectangular tiling \sep flat torus \sep orthogonal graphs \sep VLSI
\MSC[2020] 05B45 \sep 52C20
\end{keyword}

\end{frontmatter}

%%%%%%%%%%%%%%%%%%%%%%%%%%%%%%%%%%%%%%%
\section{Introduction}
%%%%%%%%%%%%%%%%%%%%%%%%%%%%%%%%%%%%%%%

A {\em flat torus} $\mathbb{R}^2/\Lambda$ is a representation of a torus
by the quotient of the Euclidean plane $\mathbb{R}^2$ over a {\em lattice}
$\Lambda$ generated by a basis $\{u,v\}$.
A {\em tiling} of a surface is a family of closed sets with disjoint interiors
whose union is the surface, and is {\em rectangular} if the closed sets are rectangles.
For a rectangular tiling of a flat torus,
we restrict these rectangles to be {\em axis-aligned},
that is, all rectangle sides are aligned with the coordinate axes of the Euclidean plane.

Rectangulation (or rectangular subdivision)
is a classical topic in discrete geometry and geometric graph theory,
related to VLSI floorplanning and rectangular duals of planar graphs
\cite{AckermanBarequetPinter2006,BhaskerSahni1988,KozminskiKinnen1985},
to orthogonal graph drawing \cite{Tamassia1987},
and to feasibility/realizability questions under geometric constraints
such as prescribed areas or aspect ratios \cite{EppsteinMumfordSpeckmannVerbeek2012}.
Recent surveys and structural results highlight the richness of rectangulations
as combinatorial properties \cite{AsinowskiCardinalFelsnerFusy2025,MerinoMutze2023}.
Another study is the optimization for orthogonal partitions,
notably minimizing the total edge length of a rectangular (or rectilinear) partition of an
orthogonal polygon; see
\cite{ChengDuKimRuan2004,DuZhang1990,
GonzalezZheng1990,GonzalezZheng1989,GonzalezZhengHandbook2007,
Levcopoulos1986,LingasPinterRivestShamir1982}.
Toroidal visibility representations and related
rectangular structures have been investigated
from a topological/graph-theoretic viewpoint
\cite{Biedl2022,MoharRosenstiehl1998} and extensions to other
surfaces such as the cylinder also exist \cite{HasheminezhadHashemiMcKayTahmasbi2010}.

In this paper, we consider an optimization problem
that for a flat torus (corresponding to a given basis of $\mathbb{R}^2$),
what is the minimum sum of the perimeter of axis-aligned rectangles
which partition the flat torus.
Equivalently,
since each side of a rectangle is counted twice,
we are asking the minimum length of line segments
for which partition the torus into axis-aligned rectangles,
where the length is half of the sum of the perimeter of rectangles.
Note that the length is dependent on the choice of a basis.
We give a closed-form formula for the minimum length of an axis-aligned rectangular tiling of a flat torus,
and we construct such a tiling with either exactly one rectangle or exactly two rectangles to achieve the minimum.

%%%%%%%%%%%%%%%%%%%%%%%%%%%%%%%%%%%%%
\section{Preliminary}
%%%%%%%%%%%%%%%%%%%%%%%%%%%%%%%%%%%%%

For a basis $\{u,v\}$ of $\mathbb{R}^2$,
let $\Lambda=\mathbb{Z} u+\mathbb{Z} v$ be the lattice generated by $\{u,v\}$,
which is called a {\em $\mathbb{Z}$-basis} of $\Lambda$.
The {\em covolume} of $\Lambda$, denoted by $d(\Lambda)$,
is the area of the parallelogram spanned by $\{u,v\}$,
which is invariant for any basis generating $\Lambda$.

For a flat torus $\mathbb{T}^2=\mathbb{R}^2/\Lambda$,
let $\pi:\mathbb{R}^2\to \mathbb{T}^2$ be the quotient mapping.
An axis-aligned rectangular tiling $T$ of $\mathbb{T}^2$
consists of rectangles $R_1,R_2,\ldots,R_k \subseteq \mathbb{R}^2$,
where $R_i=[a_i,b_i]\times[c_i,d_i]$ for $i=1,2,\ldots,k$,
such that the following conditions hold.
\begin{enumerate}
\item[(i)] $\pi$ is injective on the interior $(a_i,b_i)\times(c_i,d_i)$ of $R_i$ for $i=1,2,\ldots,k$.
\item[(ii)] The images on the interiors of $R_i$'s are pairwise disjoint.
\item[(iii)] $\mathbb{T}^2=\bigcup_{i=1}^k \pi (R_i)$.
\end{enumerate}

The {\em skeleton} of such a tiling $T$ is the graph $G(T)$
whose vertex set consists of the images of corners of all rectangles,
and the images of sides of all rectangles are subdivided by vertices into edges.
The {\em length} $m(T)$ of $T$ is the sum of lengths of all edges of $G(T)$.
Since each rectangle $R_i$ contributes $2[(b_i-a_i)+(d_i-c_i)]$ to the length,
and each edge of $G(T)$ either is incident to exactly two rectangles
or belongs to the opposite sides of one rectangle,
we have that $m(T)$ is half of the sum of perimeters of all $R_i$'s,
that is,
$$
     m(T)=\sum_{i=1}^k[(b_i-a_i)+(d_i-c_i)].
$$

The goal of this article is to find the minimum length $m(T)$,
where $T$ runs over all axis-aligned rectangular tilings of
the flat torus $\mathbb{R}^2/\Lambda$ corresponding to
a lattice $\Lambda$ generated by a given basis of $\mathbb{R}^2$.
For convenience, let $m(\Lambda)=\min\{m(T): T$
is an axis-aligned rectangular tiling of the flat torus $\mathbb{R}^2/\Lambda\}$.

For $u=(x,y)\in\mathbb{R}^2$,
the $\ell^1$-norm of $u$ is denoted by $\normone{u}=|x|+|y|$. Let
$$
    Q_1=\{(x,y):xy\ge0\} \quad\mbox{and}\quad Q_2=\{(x,y):xy<0\}
$$
to be the closed/open ``quadrants'' for the sign of the product.

\begin{theorem}[Minkowski's convex body theorem \cite{Cassels1959}]\label{thm:Minkowski}
Let $\Lambda$ be a lattice.
If $C$ is a convex centrally symmetric region in $\mathbb{R}^2$ with
volume greater than $4 d(\Lambda)$,
then $C$ contains a nonzero lattice point.
\end{theorem}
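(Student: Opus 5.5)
We prove Minkowski's theorem in the plane by the classical Blichfeldt-type argument: a pigeonhole principle for area modulo the lattice.

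\emph{Step 1 (Blichfeldt).} We first show that if $S\subseteq\mathbb{R}^2$ is measurable with area strictly greater than $d(\Lambda)$, then $S$ contains two distinct points $p,q$ with $p-q\in\Lambda$. Fix a fundamental parallelogram $P=\{su+tv:0\le s,t<1\}$, whose translates $P+\lambda$ for $\lambda\in\Lambda$ partition $\mathbb{R}^2$. Decompose
$$S=\bigsqcup_{\lambda\in\Lambda}\bigl(S\cap(P+\lambda)\bigr)$$
and translate each piece by $-\lambda$ into $P$. These translated pieces lie in $P$ and their areas sum to $\operatorname{area}(S)>d(\Lambda)=\operatorname{area}(P)$. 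By countable additivity of Lebesgue measure, two of them must overlap: if they were pairwise disjoint, their total area would be at most $\operatorname{area}(P)$. So some $x\in P$ satisfies $x=p-\lambda=q-\mu$ with $p,q\in S$ and $\lambda\neq\mu$. Then $p\ne q$ and $p-q=\lambda-\mu\in\Lambda$.

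\emph{Step 2 (halving).} Apply Step 1 to $S=\tfrac12 C=\{x/2:x\in C\}$. Its area is $\tfrac14\operatorname{area}(C)>d(\Lambda)$, so Step 1 gives distinct $p,q\in\tfrac12C$ with $p-q\in\Lambda\setminus\{0\}$. Write $p=a/2$ and $q=b/2$ with $a,b\in C$. By central symmetry, $-b\in C$. By convexity,
$$p-q=\tfrac12 a+\tfrac12(-b)\in C.$$
Hence $p-q$ is a nonzero lattice point in $C$.

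\emph{Main obstacle.} The only delicate step is Step 1. We must check that the translated pieces are measurable, which holds because $C$ is convex and hence measurable. We must also check that the overlap produces two genuinely distinct points. Distinctness follows from $\lambda\ne\mu$: the same $x$ gives $p=x+\lambda$ and $q=x+\mu$, so $p\neq q$. The remaining steps are routine.

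In the paper itself this theorem is simply cited from \cite{Cassels1959}, so no proof needs to be reproduced there. The argument above is the one we would give if a self-contained proof were desired.
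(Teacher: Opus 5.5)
Your proof is correct. The paper gives no proof of its own and simply cites Cassels, and your argument (Blichfeldt's pigeonhole lemma applied to $\tfrac12 C$, then $p-q=\tfrac12 a+\tfrac12(-b)\in C$) is the classical proof found in that reference. Note only that the step $-b\in C$ uses symmetry about the origin, which is the intended meaning of ``centrally symmetric'' here and is how the paper applies the theorem to the set $K$ in Lemma~\ref{lem:qb}.
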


%quadrant basis

%-------------------------------------------------------
\begin{lemma}\label{lem:qb}
For a lattice $\Lambda$ generated by a basis of $\mathbb{R}^2$, if
$u$ is of minimum $\ell^1$-norm in $(\Lambda\cap Q_1)\setminus\{0\}$ and
$v$ is of minimum $\ell^1$-norm in $\Lambda\cap Q_2$,
then $\{u,v\}$ is a $\mathbb{Z}$-basis of $\Lambda$.
\end{lemma}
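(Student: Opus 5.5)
The plan is to show that the half-open fundamental parallelogram $P=\{\alpha u+\beta v: 0\le \alpha,\beta<1\}$ contains no lattice point other than $0$; this is equivalent to $\{u,v\}$ being a $\mathbb{Z}$-basis of $\Lambda$. First, $u$ and $v$ are linearly independent: $v\in Q_2$ has $xy<0$, while every nonzero real multiple of $u$ lies in $Q_1$. Hence $\{u,v\}$ is at least a basis of $\mathbb{R}^2$.

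\textbf{Setting up the contradiction.} Suppose $w=\alpha u+\beta v\in\Lambda\setminus\{0\}$ with $0\le\alpha,\beta<1$. The point $u+v-w$ lies in $\Lambda$ and has coefficients $(1-\alpha,1-\beta)$. It is nonzero because $\alpha,\beta<1$, and it lies in the closed parallelogram. So I may replace $w$ by $u+v-w$ when $\alpha+\beta>1$, after which $w\ne 0$ and $\alpha+\beta\le 1$ (with $\alpha,\beta\in[0,1]$).

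\textbf{Degenerate cases.} If $\alpha=0$, then $w=\beta v$ with $0<\beta<1$. This point lies in $\Lambda\cap Q_2$ and has smaller norm than $v$, a contradiction. If $\beta=0$, then $w=\alpha u$ with $0<\alpha<1$ contradicts the minimality of $u$ in the same way. The boundary cases $\alpha=1$ or $\beta=1$ force $w=u$ or $w=v$ by the constraint $\alpha+\beta\le1$, so there is nothing to exclude there.

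\textbf{Normalizing signs.} Replacing $u$ by $-u$, $v$ by $-v$, or reflecting in a coordinate axis preserves $\Lambda$-membership of the relevant vectors, the sets $Q_1$, $Q_2$, and the $\ell^1$-norm. So I will assume $u=(x_1,y_1)$ with $x_1,y_1\ge0$ and $v=(x_2,y_2)$ with $x_2>0>y_2$.

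\textbf{Main case, $\alpha,\beta>0$ and $\alpha+\beta\le1$.} Here $w=(\alpha x_1+\beta x_2,\ \alpha y_1+\beta y_2)$ has positive first coordinate. The key observation is that on each closed quadrant the $\ell^1$-norm is a linear functional: it equals $x+y$ on $\{x,y\ge0\}$ and $x-y$ on $\{x\ge0\ge y\}$.

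If $w$ lies in the fourth quadrant ($w\in Q_2$, or on the axis), then
$$\normone{w}=\alpha(x_1-y_1)+\beta\normone{v}\le \alpha\normone{u}+\beta\normone{v}.$$

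If $w\in Q_1$, then
$$\normone{w}=\alpha\normone{u}+\beta(x_2+y_2),\qquad x_2+y_2<\normone{v}.$$

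In each case I want strict inequality against the minimum of the appropriate type, using $\alpha+\beta\le1$. Where the direct estimate falls short, I will also test the companion lattice points $u-w$, $v-w$, $w-u$, $w-v$, each of which has coefficients in $[-1,1]$. Because of the sign pattern, one of them stays in the same closed quadrant with strictly smaller linear norm.

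\textbf{Expected obstacle.} This last step is the crux. The estimate $\normone{w}\le\alpha\normone{u}+\beta\normone{v}$ beats $\min(\normone{u},\normone{v})$ only for the type that $w$ actually has, and $\normone{u}$ and $\normone{v}$ can be very different. The comparison must therefore use the type-specific linear functional, and not the triangle inequality.

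I will handle it as follows. If $w\in Q_1$, compare $w$ with $u$. When $x_2+y_2\le0$, we get $\normone{w}\le\alpha\normone{u}<\normone{u}$ at once. When $x_2+y_2>0$, pass to $w-v=\alpha u+(\beta-1)v$ or to $u-w$ and redo the computation in the quadrant where it lands. The case $w\in Q_2$ is symmetric, with the roles of $u,v$ and the functionals $x\pm y$ swapped.

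If this case bookkeeping becomes unwieldy, the fallback is Theorem~\ref{thm:Minkowski}. Applied to the $\ell^1$-ball, it bounds $\normone{u}$ and $\normone{v}$ in terms of $d(\Lambda)$. One would then show that $|\det(u,v)|$ is less than $2d(\Lambda)$, which forces $|\det(u,v)|=d(\Lambda)$, since $|\det(u,v)|$ is a positive integer multiple of $d(\Lambda)$.
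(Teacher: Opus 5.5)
There is a genuine gap, located exactly at the step you flag as the crux, and that step is the whole content of the lemma. In the main case you only settle the easy subcase ($w\in Q_1$ and $x_2+y_2\le 0$). For everything else you assert that some companion ``stays in the same closed quadrant with strictly smaller linear norm'' and promise to redo the computation. That is not proved, and the heuristic behind it is wrong. Take $u=(0,1)$, $v=(3,-1)$ and $w=0.8u+0.15v=(0.45,0.65)$, so that $x_2+y_2>0$. Then $w\in Q_1$ with $\normone{w}=1.1\ge\normone{u}$, and $w-v=(-2.55,1.65)\in Q_2$ with $\normone{w-v}=4.2\ge\normone{v}$. None of $\pm(u-w)$, $\pm(v-w)$ lies in $Q_1$; the violator is $u-w=(-0.45,0.35)\in Q_2$. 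No lattice realizes this configuration (that is the lemma), but it shows what the missing step must do. In this subcase you get $\normone{u-w}<\normone{v}$ only by combining three facts: the failure of $w$, i.e.\ $\beta(x_2+y_2)\ge(1-\alpha)\normone{u}$; the failure of $w-v$, i.e.\ $\alpha(y_1-x_1)\ge\beta\normone{v}$; and $\alpha+\beta\le 1$. Together these give $\alpha>1/2>\beta$, and then $\normone{u-w}=(1-\alpha)(y_1-x_1)+\beta\normone{v}<\normone{v}$. Nothing of this sort is supplied, and the case $w\in Q_2$ is only declared symmetric. (Minor: reflecting in a coordinate axis does not preserve $Q_1$ and $Q_2$. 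Only $u\mapsto -u$, $v\mapsto -v$ are legitimate normalizations, and they suffice.)

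The Minkowski fallback does not repair this as stated. Applied to an $\ell^1$-ball, Theorem~\ref{thm:Minkowski} bounds only $\min\{\normone{u},\normone{v}\}$, by $\sqrt{2d(\Lambda)}$, and says nothing about the larger one. For $\Lambda=\mathbb{Z}(1,0)+\mathbb{Z}(0,N)$ one has $\normone{u}=1$ but $\normone{v}=N+1$. So it cannot yield $|\det(u,v)|<2d(\Lambda)$. The missing idea is a body adapted to both minima at once, which is what the paper uses: $K=\{|x+y|<\normone{u},\ |x-y|<\normone{v}\}$. A nonzero lattice point of $K$ would beat $u$ if it lies in $Q_1$ (its norm is $|x+y|$) and $v$ if it lies in $Q_2$ (its norm is $|x-y|$), so $K\cap\Lambda=\{0\}$. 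The area of $K$ is $2\normone{u}\normone{v}$, and in your normalization $\normone{u}\normone{v}-|\det(u,v)|=x_1x_2+y_1|y_2|>0$. So if $\{u,v\}$ were not a basis, the area of $K$ would exceed $2|\det(u,v)|\ge 4d(\Lambda)$, contradicting Theorem~\ref{thm:Minkowski}.

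Alternatively, your elementary route can be completed, but it needs an argument using all four corner points at once rather than case-by-case estimates. With $F=x+y$ and $G=x-y$ one has $\normone{z}=\max\{|F(z)|,|G(z)|\}$. Minimality puts every nonzero lattice point in one of the four convex pieces $\pm\{F\ge\normone{u},\,F\ge|G|\}$ and $\pm\{G\ge\normone{v},\,G>|F|\}$, which lie in the four successive quarter-sectors around $0$. For $0<\alpha,\beta<1$ the nonzero points $w,w-u,w-v,w-u-v$ are the vertices of a parallelogram with $0$ in its interior. No edge $\pm u$ or $\pm v$ can join opposite pieces, so going around, the vertices occupy the four pieces in turn. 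The diagonals $\pm(u\pm v)$ then force $|F(v)|\ge\normone{u}$ and $|G(u)|\ge\normone{v}$, which contradicts $|F(v)|<\normone{v}$ and $|G(u)|\le\normone{u}$.
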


\begin{proof}
Write $u = (p,q) \in Q_1$ and $v = (r,s) \in Q_2$.
Replacing $u$ by $-u$ and $v$ by $-v$ if necessary,
we may assume that $p,q \ge 0$ but not both zero as $u \ne 0$, and $r<0<s$.
Let $\Lambda'=\mathbb{Z} u+\mathbb{Z} v\subseteq\Lambda$ be the lattice generated by $\{u,v\}$.
If $\Lambda'\ne \Lambda$,
then $(ps-qr)=d(\Lambda')\ge2 d(\Lambda)$ since $d(\Lambda)\mid d(\Lambda')$.

Consider the convex set symmetric with respect to the origin:
$$
   K=\{(x,y)\in \mathbb{R}^2:
   |x+y|<\normone{u},\,|x-y|<\normone{v}\}.
$$
The area of $K$ equals to $2\normone{u}\normone{v}=2(p+q)(-r+s)> 2(ps-qr)\ge 4d(\Lambda)$,
since $qs-pr>0$.
By Theorem \ref{thm:Minkowski},
the set $K$ contains a nonzero lattice point $w\in\Lambda\setminus\{0\}$
with either $w\in Q_1$ implying $\normone{w}<\normone{u}$
or else $w\in Q_2$ implying $\normone{w}<\normone{v}$,
a contradiction to the minimality of $u$ or $v$.
Therefore, $\Lambda=\Lambda'$ and so $\{u,v\}$ generates $\Lambda$.
\end{proof}

The $\mathbb{Z}$-basis $\{u,v\}$ chosen in Lemma \ref{lem:qb}
is called a {\em quadrant basis} of $\Lambda$.

%%%%%%%%%%%%%%%%%%%%%%%%%%%%%%%%%%%%%%%%%%%%%%%%%%%%%%%%%%
\section{Axis-aligned rectangular tiling of a flat torus}
%%%%%%%%%%%%%%%%%%%%%%%%%%%%%%%%%%%%%%%%%%%%%%%%%%%%%%%%%%
%====================================================
\subsection{Exactly one rectangle}
%====================================================

For a flat torus, we give a sufficient condition for which
there exists an axis-aligned rectangular tiling with exactly one rectangle.

%Horizontal period

%-------------------------------------------------------------
\begin{lemma}\label{lem:hperiod}
Let $\Lambda$ be a lattice of covolume $d(\Lambda)$.
If $(a,0)\in \Lambda$ for some $a\ne 0$,
then for all $(x,y)\in\Lambda$, we have $\frac{d(\Lambda)}{a}\mid y$.
\end{lemma}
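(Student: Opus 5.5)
Since $\Lambda$ is a lattice in $\mathbb{R}^2$ and $\Lambda\cap(\mathbb{R}\times\{0\})$ contains the nonzero vector $(a,0)$, this intersection is a rank-one discrete subgroup. So it equals $\mathbb{Z}(a_0,0)$ for some primitive vector $(a_0,0)$ with $a_0>0$, and $a=na_0$ for some nonzero integer $n$. The plan is to prove the claim first for $a_0$ and then pass to $a$. Reading ``$\frac{d(\Lambda)}{a}\mid y$'' as ``$y\in\frac{d(\Lambda)}{a}\mathbb{Z}$,'' the passage to $a$ is immediate. Indeed, $\frac{d(\Lambda)}{a}=\frac{1}{n}\cdot\frac{d(\Lambda)}{a_0}$, so every integer multiple of $\frac{d(\Lambda)}{a_0}$ is also an integer multiple of $\frac{d(\Lambda)}{a}$. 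The sign of $a$ is irrelevant for the same reason.

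For the primitive case, I would use the standard fact that a primitive vector of a rank-two lattice can be extended to a $\mathbb{Z}$-basis. Choose $w=(b,c)\in\Lambda$ so that $\{(a_0,0),w\}$ is a $\mathbb{Z}$-basis of $\Lambda$. To justify this, project $\Lambda$ to $\Lambda/\mathbb{Z}(a_0,0)$. This quotient is torsion-free because $(a_0,0)$ is primitive in the line $\Lambda\cap(\mathbb{R}\times\{0\})$, and it has rank one. It is therefore infinite cyclic, and $w$ is taken to be any lift of a generator. Because covolume is basis-independent, $d(\Lambda)=|\det\begin{pmatrix}a_0&0\\ b&c\end{pmatrix}|=a_0|c|$, so $|c|=\frac{d(\Lambda)}{a_0}$.

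Now take any $(x,y)\in\Lambda$ and write $(x,y)=k(a_0,0)+\ell w$ with $k,\ell\in\mathbb{Z}$. The second coordinate is $y=\ell c$, which is an integer multiple of $\frac{d(\Lambda)}{a_0}$. This proves the primitive case and, by the first paragraph, the lemma.

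The main obstacle is the basis-extension step, and I would spell out the argument there. An alternative that avoids quotients works directly with the second coordinates. The set $\{y:(x,y)\in\Lambda\}$ is a discrete subgroup of $\mathbb{R}$: it is a subgroup because it is the image of a group homomorphism, and it is discrete because a lattice has only finitely many points in any bounded strip $|x|\le a_0/2$, and every point can be translated into that strip by multiples of $(a_0,0)$. Hence this set equals $c\mathbb{Z}$ for some $c>0$. Picking $w=(b,c)\in\Lambda$, every lattice point minus a suitable multiple of $w$ lies on the $x$-axis, so it is a multiple of $(a_0,0)$. Thus $\{(a_0,0),w\}$ generates $\Lambda$, and the determinant computation above finishes the proof.
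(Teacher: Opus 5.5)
Your proof is correct, but it takes a more structural route than the paper.

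The paper's argument is one line. For $(x,y)\in\Lambda$ with $y\neq 0$, the vectors $(a,0)$ and $(x,y)$ span a full-rank sublattice $\Lambda'\subseteq\Lambda$ of covolume $|ay|$. The covolume of a sublattice is an integer multiple of $d(\Lambda)$: it equals $[\Lambda:\Lambda']\,d(\Lambda)$, where the index is $|\det|$ of the integer matrix expressing a basis of $\Lambda'$ in a basis of $\Lambda$. Hence $|ay|\in d(\Lambda)\mathbb{Z}$. This works directly for every nonzero $a$, so there is no need to pass to the primitive vector $(a_0,0)$ or to extend it to a basis.

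You replace this index fact with the basis-extension fact, which costs you the reduction from $a$ to $a_0$ and the torsion-freeness (or projection) argument. In exchange you prove more. You show that the set of second coordinates of $\Lambda$ is exactly $\frac{d(\Lambda)}{a_0}\mathbb{Z}$ with $a_0=d_x$, so $\frac{d(\Lambda)}{d_x}$ is actually attained as a second coordinate. The paper's divisibility statement alone does not give this; it corresponds to the later computational remark $\frac{d(\Lambda)}{d_x}=\gcd(q,s)$.

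One small wording fix in your alternative argument: the strip $|x|\le a_0/2$ is unbounded. What contains only finitely many lattice points is a box $[-a_0/2,a_0/2]\times[-M,M]$. With that correction the discreteness argument goes through.
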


\begin{proof}
We may assume that $y\ne 0$
and let $\Lambda'$ be the lattice generated by $(a,0)$ and $(x,y)$.
Since $d(\Lambda)\mid d(\Lambda')$ and $d(\Lambda')=|ay|$, we have
$\frac{d(\Lambda)}{a}\mid y$.
\end{proof}
%\begin{proof}
%We may assume that $y\ne 0$
%and let $\Lambda'$ be the lattice generated by $(d_x,0)$ and $(x,y)$.
%Since $d(\Lambda)\mid d(\Lambda')$ and $d(\Lambda')=d_xy$, we have
%$\frac{d(\Lambda)}{d_x}\mid y$.
%\end{proof}

We set
$$
    m_x(\Lambda):= d_x+\frac{d(\Lambda)}{d_x}
$$
if there exists $(a,0)\in\Lambda $ with $a\ne 0$,
where $d_x$ is the least positive number such that $(d_x,0)\in\Lambda$,
and $m_x(\Lambda):=\infty $ otherwise.
Similarly, we set
$$
    m_y(\Lambda):= d_y+\frac{d(\Lambda)}{d_y}
$$
if there exists $(0,b)\in\Lambda $ with $b\ne 0$,
where $d_y$ is the least positive number such that $(0,d_y)\in\Lambda$,
and $m_y(\Lambda):=\infty$ otherwise.

%[One-rectangle tiling]
%-------------------------------------------------------
\begin{proposition}\label{prop:onerect}
For a flat torus $\mathbb{T}^2=\mathbb{R}^2/\Lambda$,
if $m_x(\Lambda)<\infty$, then
there exists a tiling $T$ of $\mathbb{T}^2$ with exactly one rectangle
$$
     R=[0,d_x]\times[0,\dfrac{d(\Lambda)}{d_x}],
$$
and length $m(T)=m_x(\Lambda)$.
The analogous statement holds for $m_y(\Lambda)$.
\end{proposition}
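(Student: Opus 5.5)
We prove the statement for $m_x(\Lambda)$; the case of $m_y(\Lambda)$ follows by exchanging the roles of the coordinates. The strategy is to show that $\Lambda$ contains both $(d_x,0)$ and a vector $(t,h)$ with $h=d(\Lambda)/d_x$, and that these two vectors form a $\mathbb{Z}$-basis. Then $R=[0,d_x]\times[0,h]$ is a fundamental domain up to a horizontal shear, and conditions (i)--(iii) can be checked directly.

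\emph{Step 1: a basis containing $(d_x,0)$.} First, $d_x$ is well defined. Since $\Lambda$ is discrete, the set $\{a>0:(a,0)\in\Lambda\}$ is a nonempty discrete subset of $(0,\infty)$, so it has a least element. Next, the projection $\{y:(x,y)\in\Lambda\}$ is a subgroup of $\mathbb{R}$. By Lemma \ref{lem:hperiod} it is contained in $h\mathbb{Z}$, where $h=d(\Lambda)/d_x$, so it is cyclic and equals $h'\mathbb{Z}$ for some $h'$ that is a positive multiple of $h$. Choose $(t,h')\in\Lambda$. For any $(x,y)\in\Lambda$, subtract $(y/h')(t,h')$ to obtain a lattice vector on the $x$-axis. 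By minimality of $d_x$ and discreteness, this vector lies in $\mathbb{Z}(d_x,0)$. Hence $\{(d_x,0),(t,h')\}$ generates $\Lambda$, so $d(\Lambda)=d_xh'$ and therefore $h'=h$.

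\emph{Step 2: the tiling properties.} Condition (iii): take any point $(x,y)$. Subtract $\lfloor y/h\rfloor(t,h)$ to bring $y$ into $[0,h)$, then subtract an integer multiple of $(d_x,0)$ to bring $x$ into $[0,d_x)$; the second step does not change $y$. So $\pi(R)=\mathbb{T}^2$. Condition (i): suppose two interior points differ by a lattice vector $m(d_x,0)+n(t,h)$. Their $y$-difference $nh$ has absolute value less than $h$, so $n=0$. Their $x$-difference $md_x$ then has absolute value less than $d_x$, so $m=0$. Condition (ii) is vacuous since there is only one rectangle. The length is $m(T)=d_x+h=m_x(\Lambda)$ by the formula for $m(T)$.

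\emph{Main obstacle.} The only delicate point is Step 1: showing that the $y$-projection of $\Lambda$ is exactly $h\mathbb{Z}$ rather than a proper subgroup, which is what prevents $R$ from overlapping itself. This is handled by the covolume identity $d(\Lambda)=d_xh'$ once the basis $\{(d_x,0),(t,h')\}$ is established, combined with the divisibility from Lemma \ref{lem:hperiod}.
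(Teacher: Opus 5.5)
Your proof is correct, but it gets the covering condition (iii) differently from the paper. The paper never builds a basis. For injectivity it argues coordinate by coordinate as you do, using only Lemma~\ref{lem:hperiod} (every $y$-coordinate of $\Lambda$ lies in $\frac{d(\Lambda)}{d_x}\mathbb{Z}$) and the fact that horizontal lattice vectors lie in $d_x\mathbb{Z}$. For covering it counts area: $\pi$ is injective on the interior of $R$, so $\pi(R)$ has area $d_x\cdot\frac{d(\Lambda)}{d_x}=d(\Lambda)$, which is the area of $\mathbb{T}^2$, and hence $\pi(R)=\mathbb{T}^2$.

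You instead upgrade the containment of the $y$-projection in $h\mathbb{Z}$ to equality via the covolume identity $d(\Lambda)=d_xh'$. That gives the explicit basis $\{(d_x,0),(t,h)\}$, and you then cover by a two-step division algorithm. The paper's route is shorter but implicitly relies on measure-theoretic facts: $\pi$ preserves area where it is injective, and a compact subset of full measure is the whole torus. Yours is purely algebraic and yields more. It shows $R$ is a sheared fundamental domain whose top side is glued to its bottom side with horizontal shift $t$, which describes the one-rectangle skeleton explicitly. Your covolume step is the algebraic substitute for the paper's area count.

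Two small corrections. First, your closing paragraph reverses the roles. If the $y$-projection were a proper subgroup $kh\mathbb{Z}$ with $k\ge 2$, then $R$ would be too short to cover the torus; it would not overlap itself. Overlap is excluded by the containment in $h\mathbb{Z}$ from Lemma~\ref{lem:hperiod}, which is all your injectivity argument actually uses. Second, a nonempty discrete subset of $(0,\infty)$ need not have a least element (take $\{1/n\}$). What guarantees the minimum is that points of $\Lambda$ are uniformly separated, so $\{a>0:(a,0)\in\Lambda\}$ is bounded away from $0$ and its infimum is attained. The paper simply assumes that $d_x$ exists in its definition.
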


\begin{proof}
Let $\pi:\mathbb{R}^2\to\mathbb{T}^2$ be the quotient mapping.
First we prove that $\pi$ is injective on the interior of $R$.
If $\pi(x)=\pi (x')$ for some $x,x'\in (0,d_x)\times(0,\frac{d(\Lambda)}{d_x})$,
then $x-x'\in \Lambda$, say $x-x'=(a,b)$.
Since $b\in (-\frac{d(\Lambda)}{d_x},\frac{d(\Lambda)}{d_x})\cap \frac{d(\Lambda)}{d_x}\mathbb{Z}$, we have $b=0$
and hence $a\in (-d_x,d_x)\cap d_x\mathbb{Z}$,
which gives $a=0$ and forces $x=x'$.

Since %$\pi$ is a local isometry on $R$,
the area of $\pi(R)$ is equal to the area of $R$
with value $d_x\times \frac{d(\Lambda)}{d_x}=d(\Lambda)$, the area of $\mathbb{T}^2$,
which implies $\pi(R)=\mathbb{T}^2$.

Therefore, $T$ is an axis-aligned rectangular tiling of $\mathbb{T}^2$ with exactly one
rectangle $R$ and length $m(T)=(d_x-0)+(\frac{d(\Lambda)}{d_x}-0)=m_x(\Lambda)$.
\end{proof}

%======================================================
\subsection{Exactly two rectangles}
%======================================================

Here we give a sufficient condition for which there exists an
axis-aligned rectangular tiling of a flat torus with exactly two rectangles

%[Two-rectangle tiling]
\begin{proposition}\label{prop:tworect}
If $\Lambda$ is a lattice generated by a basis $\{u,v\}$ of $\mathbb{R}^2$
with $u=(p,q)$ and $v=(r,s)$ satisfying $pq>0$ and $rs<0$,
then the flat torus $\mathbb{T}^2=\mathbb{R}^2/\Lambda$
has a rectangular tiling $T$ with exactly two rectangles
$$
    R_1=[-|r|,|p|-|r|]\times[0,|s|] \text{ and }
    R_2=[|p|-|r|,|p|]\times[0,|q|],
$$
and length $m(T)=\normone{u}+\normone{v}$.
\end{proposition}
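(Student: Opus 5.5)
The plan is to reduce to a sign-normalized case, check the area and length bookkeeping directly, and then argue as in the proof of Proposition~\ref{prop:onerect}: show that $\pi$ is injective on the interior of $U=R_1\cup R_2$, and use the area of $U$ to get surjectivity. Replacing $u$ by $-u$ and $v$ by $-v$ changes neither $\Lambda$ nor the rectangles (they depend only on $|p|,|q|,|r|,|s|$), so I will assume $p,q>0$ and $r<0<s$. Then
$$
R_1=[r,p+r]\times[0,s],\qquad R_2=[p+r,p]\times[0,q].
$$
So $R_1$ has width $p$ and height $s$, and $R_2$ has width $-r$ and height $q$. They meet along the vertical segment $x=p+r$, they have disjoint interiors, and both sit on the line $y=0$.

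The bookkeeping comes first. The total area is $ps+(-r)q=ps-qr=|\det(u,v)|=d(\Lambda)$. The length is $m(T)=(p+s)+(-r+q)=\normone{u}+\normone{v}$. For a tiling, conditions (i) and (ii) together say exactly that $\pi$ is injective on $\operatorname{int}R_1\cup\operatorname{int}R_2$. Once that is known, $\pi(R_1)\cup\pi(R_2)$ is a compact subset of $\mathbb{T}^2$ of area $d(\Lambda)$, so it equals $\mathbb{T}^2$, exactly as in Proposition~\ref{prop:onerect}, and (iii) follows.

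The main step, and the main obstacle, is injectivity. Suppose $z,z'$ lie in the interior of $U$ and $z'-z=au+bv=(ap+br,\,aq+bs)$ with $(a,b)\neq(0,0)$. I would split into three cases according to which rectangles contain $z$ and $z'$.

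\emph{Case 1: both points lie in $R_1$.} Then $|ap+br|<p$ and $|aq+bs|<s$.

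\emph{Case 2: both points lie in $R_2$.} Then $|ap+br|<-r$ and $|aq+bs|<q$.

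\emph{Case 3: $z\in R_1$ and $z'\in R_2$.} Then $0<x'-x<p-r$ and $-s<y'-y<q$.

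In each case I intend to derive a contradiction from the integrality of $a,b$ together with the sign conditions. For example, in Case 1, if $ab\ge 0$ then $|aq+bs|\ge s$ whenever $b\neq0$, which forces $b=0$, and then $|ap|<p$ forces $a=0$. If instead $ab<0$, the terms $ap$ and $br$ have the same sign, so $|ap+br|\ge p$ unless $a=0$, and $a=0$ reduces to $|bs|<s$, giving $b=0$. The mixed Case 3 is the delicate one. There I expect the decisive combinations to be $(a,b)=(1,0)$ and $(0,-1)$, i.e.\ the translates $R_1+u$ and $R_1-v$, which are exactly the pieces glued on top of and to the right of $R_2$ in the periodic L-shaped pattern. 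I would show that every other $(a,b)$ violates one of the two coordinate bounds by the same sign argument. For these two vectors, the interiors only abut along edges, since $(p+r)+p\ge p+r$ and heights $q$ versus $s$ line up on $y=q$ and $y=s$.

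If this case analysis becomes unwieldy, I would fall back on the reverse route. I would prove directly that the translates $U+\Lambda$ cover $\mathbb{R}^2$, by stacking: translates by multiples of $u$ fill the column above $R_2$, and translates by multiples of $v$ shift these columns. Then injectivity almost everywhere follows from area considerations, because a fundamental domain of area $d(\Lambda)$ whose translates cover the plane cannot have overlapping interiors.
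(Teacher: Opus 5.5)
Your proposal is correct, but it takes a different route from the paper. The paper argues by dissection. With $P=OACB$ the fundamental parallelogram and $D,E,F,G,H$ the projection points, it writes the pentagon $ACBED$ both as $P\cup OAD\cup OBE$ and as $R_1\cup R_2\cup BCH\cup ACG$, and notes $BCH=OAD+v$ and $ACG=OBE+u$. So $R_1\cup R_2$ arises from $P$ by cutting off two triangles and translating them by lattice vectors, and is again a fundamental domain. You instead verify (i)--(ii) directly, by showing that no nonzero $au+bv$ joins two interior points, and then get (iii) from the area count as in Proposition~\ref{prop:onerect}.

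Your Case~3 does close by the same sign argument. With $X=ap+br$ and $Y=aq+bs$, the four classes cover all $(a,b)\ne(0,0)$:
\begin{itemize}
\item $a\ge1,\ b\le-1$ gives $X\ge p-r$;
\item $a\le0,\ b\ge0$ gives $X\le0$;
\item $a\ge1,\ b\ge0$ gives $Y\ge q$;
\item $a\le0,\ b\le-1$ gives $Y\le -s$.
\end{itemize}
The vectors $(1,0)$ and $(0,-1)$ are just the boundary instances $Y=q$ and $Y=-s$, where the strict inequality fails. So they need no separate geometric treatment, and your fallback route is unnecessary. (Incidentally, $R_1-v=[0,p]\times[-s,0]$ sits below $R_2$, not to its right.)

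As for what each approach buys: the paper's dissection explains where the two rectangles come from. However, it relies on a figure and leaves implicit the step from ``rearranged fundamental domain'' to conditions (i)--(iii). That step needs a multiplicity-counting argument plus an open-set argument to upgrade to injectivity on interiors. Your arithmetic check is fully rigorous and uniform over all configurations ($p$ versus $|r|$, $q$ versus $s$), at the cost of giving no geometric insight into why these particular rectangles work.
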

\begin{proof}
Replacing $u$ by $-u$ and $v$ by $-v$ if necessary,
we may assume that $p,q>0$ and $r<0<s$.

As shown in Figure \ref{fig1}, consider
the parallelogram $P=OACB$ spanned by $\{u,v\}$ in $\mathbb{R}^2$,
where $\overrightarrow{OA}=u=(p,q)$ and $\overrightarrow{OB}=v=(r,s)$.
Let $D,E,F$ be the projection points of $A,B,C$, respectively, onto the $x$-axis;
and let $G,H$ be the projection points of $A,B$, respectively, onto the line segment $\overline{CF}$.
Choose $R_1=BEFH=[r,p+r]\times [0,s]$ and $R_2=ADFG=[p+r,p]\times [0,q]$.

\begin{figure}[htb]
\centering
\begin{tikzpicture}[scale=1/2]\setlength{\fboxsep}{1pt}
\draw [thick, ->](-5,0)--(4,0) node [right]{$x$};
\draw [thick, ->](0,-1)--(0,7) node [above]{$y$};
\foreach \n/\x/\y in {o/0/0,a/3/5,b/-4/1,c/-1/6,d/3/0,e/-4/0,f/-1/0,g/-1/5,h/-1/1}
\path coordinate (\n) at (\x,\y);
\draw[very thick] (a)--(c)--(b) (d)--(e) (a)--(d) (b)--(e) (c)--(f) (a)--(g) (b)--(h) ;
\foreach \n/\N/\p in {o/O/below right,a/A/right,b/B/left,c/C/above,
d/D/below,e/E/below,f/F/below,g/G/left,h/H/right}
\draw[fill] (\n) circle(.2) node[\p]{$\N$};
\draw[line width=3pt,-latex] (o)--node {\colorbox{white}{$u$}}(a);
\draw[line width=3pt,-latex] (o)--node {\colorbox{white}{$v$}}(b);
\end{tikzpicture}
\qquad
\begin{tikzpicture}[scale=1/2]\setlength{\fboxsep}{1pt}
\draw [thick, ->](-5,0)--(4,0) node [right]{$x$};
\draw [thick, ->](0,-1)--(0,7) node [above]{$y$};
\foreach \n/\x/\y in {o/0/0,b/-4/5,a/2/1,c/-2/6,d/2/0,e/-4/0,f/-2/0,g/-2/1,h/-2/5}
\path coordinate (\n) at (\x,\y);
\draw[very thick] (a)--(c)--(b) (d)--(e) (a)--(d) (b)--(e) (c)--(f) (a)--(g) (b)--(h) ;
\foreach \n/\N/\p in {o/O/below right,a/A/right,b/B/left,c/C/above,
d/D/below,e/E/below,f/F/below,g/G/left,h/H/right}
\draw[fill] (\n) circle(.2) node[\p]{$\N$};
\draw[line width=3pt,-latex] (o)--node {\colorbox{white}{$u$}}(a);
\draw[line width=3pt,-latex] (o)--node {\colorbox{white}{$v$}}(b);
\end{tikzpicture}
\caption{Two possible cases for the rectangular tiling with exactly two rectangles.} \label{fig1}
\end{figure}
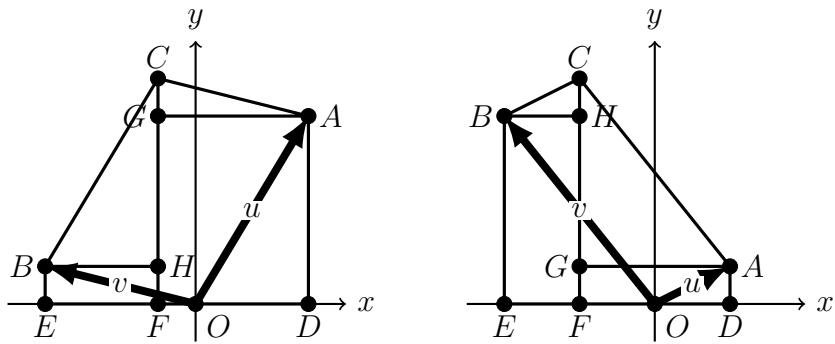

Then the pentagon $ACBED=P\cup OAD\cup OBE= R_1\cup R_2\cup BCH\cup ACG$.
Since $OAD+v=BCH$ and $OBE+u=ACD$, we have that
$T$ is a rectangular tiling of $\mathbb{T}^2$ with rectangles $R_1$ and $R_2$,
and length $m(T)=(p+s)+(-r+q)= (p+q)+(-r+s)=\normone{u}+\normone{v}$.
\end{proof}

%%%%%%%%%%%%%%%%%%%%%%%%%%%%%%%%%%%%%%%%%%%%%%%%%%%%%%%%%%%%%%%%%%%%%%%%%%%%%%%%%
\section{Lower bound on the length of an axis-aligned rectangular tiling}
%%%%%%%%%%%%%%%%%%%%%%%%%%%%%%%%%%%%%%%%%%%%%%%%%%%%%%%%%%%%%%%%%%%%%%%%%%%%%%%%%%

In this section we prove a lower bound on
the length of an axis-aligned rectangular tiling of a flat torus.
Let $G(T)$ and $m(T)$ be the skeleton and the length, respectively, of
an axis-aligned rectangular tiling $T$ of a flat torus corresponding to a given lattice $\Lambda$.

%============================================================================
\subsection{Skeletons that contain a horizontal/vertical cycle}
%============================================================================

%[Horizontal/vertical closed curves]

%--------------------------------------------
\begin{proposition}\label{prop:axiscyle}
If $G(T)$ contains a horizontal cycle, then $m(T)\ge m_x(\Lambda)$.
If $G(T)$ contains a vertical cycle, then $m(T)\ge m_y(\Lambda)$.
\end{proposition}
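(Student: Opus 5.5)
By the symmetry between the two axes, it suffices to prove the horizontal statement. The vertical one follows by interchanging the coordinates, which swaps $m_x$ and $m_y$.

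\emph{Step 1: the horizontal cycle forces $m_x(\Lambda)<\infty$.} Let $C$ be a horizontal cycle in $G(T)$, that is, a closed curve on $\mathbb{T}^2$ made of horizontal edges. Lift $C$ through $\pi$ starting at a point $z\in\mathbb{R}^2$. Since every edge is horizontal, the lift is a horizontal path ending at $z+(a,0)$, and because $C$ is closed, $(a,0)\in\Lambda$. I take $C$ to be a simple cycle, so it is an essential closed curve and its lift is not closed, giving $a\neq 0$. One could worry that $C$ is null-homotopic, but a horizontal closed path has a monotone lift unless it backtracks, and a cycle in a graph does not backtrack. So $a\ne 0$, which gives $m_x(\Lambda)<\infty$ and $d_x\mid a$. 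The length of $C$ is then at least $|a|\ge d_x$.

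\emph{Step 2: bound the vertical length.} I claim the vertical edges of $G(T)$ have total length at least $d(\Lambda)/d_x$. Take the vertical line $\ell=\{x=x_0\}$ in $\mathbb{R}^2$ and look at its image in $\mathbb{T}^2$. Lemma \ref{lem:hperiod} says every lattice vector has $y$-coordinate in $\frac{d(\Lambda)}{d_x}\mathbb{Z}$. Using $(d_x,0)\in\Lambda$, the image of $\ell$ is a closed vertical curve of length exactly $h:=d(\Lambda)/d_x$; this is the same argument as in Proposition \ref{prop:onerect}. Now use the rectangles instead. Every rectangle $R_i$ meets $\pi(\ell)$ in vertical segments of length $d_i-c_i$, one for each $x$-translate hit. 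Integrating over $x_0\in[0,d_x)$, the covered length $h$ at each $x_0$ is at most the sum of heights of rectangles met, so
$$h\,d_x=\int_0^{d_x}(\text{length of }\pi(\ell_{x_0}))\,dx_0=\sum_i (b_i-a_i)(d_i-c_i)=d(\Lambda),$$
which is just an area identity and gives no bound. I will instead argue per line. Choose $x_0$ generic, so that $\pi(\ell)$ avoids all vertical edges. Then $\pi(\ell)$ crosses rectangle interiors, and each rectangle it crosses contributes at most $d_i-c_i$ to the length $h$, for each crossing. A cleaner route is a direct lower bound: $\sum_i(d_i-c_i)\ge h$. Suppose instead that the vertical sides had total height $<h$. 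Then the projection of all rectangles to the $y$-coordinate modulo $h$ would not cover $\mathbb{R}/h\mathbb{Z}$. So some horizontal line $\{y=y_0\}$, with $y_0$ missed, meets no rectangle in its interior or boundary, which contradicts condition (iii). Hence $\sum_i(d_i-c_i)\ge h$; here the vertical coordinate is well defined modulo $h$ by Lemma \ref{lem:hperiod}.

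\emph{Step 3: bound the horizontal length.} I need $\sum_i(b_i-a_i)\ge d_x$, but the horizontal cycle alone only gives this for edge length, not for the sum over rectangles. Note $m(T)=\sum_i[(b_i-a_i)+(d_i-c_i)]$. The horizontal edges of $G(T)$ have total length $\sum_i(b_i-a_i)$, since each rectangle contributes two horizontal sides and each edge is counted twice. Step 1 showed these edges contain $C$, of length $\ge d_x$. Adding this to Step 2 gives $m(T)\ge d_x+d(\Lambda)/d_x=m_x(\Lambda)$.

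\emph{Main obstacle.} I expect the hardest step to be Step 1's claim $a\neq 0$, that is, that a horizontal cycle lifts to a nontrivial horizontal lattice translation. I would prove it by noting that a cycle never reverses direction along a horizontal line at a vertex without reusing an edge, so its lift is strictly monotone in $x$.
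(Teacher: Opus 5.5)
Your proof is correct, but for the vertical part it takes a different route from the paper. The paper first gets $|\gamma|=d_x$ from the horizontal cycle. It then builds a closed walk $W$ in $G(T)$ that moves only upward and rightward. The displacement of $W$ is a lattice vector with positive $y$-component, so by Lemma~\ref{lem:hperiod} the vertical edges of $W$ have total length at least $d(\Lambda)/d_x$, and they are edge-disjoint from $\gamma$.

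You argue differently. Lemma~\ref{lem:hperiod} makes $y \bmod h$, with $h=d(\Lambda)/d_x$, a well-defined map $\mathbb{T}^2\to\mathbb{R}/h\mathbb{Z}$. Condition (iii) then forces the arcs $[c_i,d_i]\bmod h$ to cover that circle, giving $\sum_i(d_i-c_i)\ge h$. You then split $m(T)=\sum_i(b_i-a_i)+\sum_i(d_i-c_i)$ and use the cycle only to bound the horizontal sum.

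What each approach buys:
\begin{itemize}
\item Your covering argument avoids the points the walk construction must justify: why the walk cannot circle a horizontal cycle forever without going up, why no edge repeats, and why the vertical displacement is positive.
\item It also proves something stronger: whenever $m_x(\Lambda)<\infty$, every tiling satisfies $\sum_i(d_i-c_i)\ge d(\Lambda)/d_x$, with no cycle hypothesis.
\item The paper's argument instead stays inside the skeleton graph and produces an explicit edge-disjoint certificate, in the same spirit as Proposition~\ref{prop:cross}.
\item Your Step 1 (a monotone, non-backtracking lift) justifies more carefully than the paper why the cycle gives a nonzero horizontal lattice vector.
\end{itemize}

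One thing to clean up: the abandoned detour at the start of Step 2 contains a false claim. The image of a vertical line $\{x=x_0\}$ in $\mathbb{T}^2$ is a closed curve only if $(0,b)\in\Lambda$ for some $b\neq 0$, that is, only if $m_y(\Lambda)<\infty$. With only $(d_x,0)\in\Lambda$ it can be an injectively immersed line; for example, $\Lambda=\mathbb{Z}(1,0)+\mathbb{Z}(\sqrt{2},1)$ has no nonzero vertical vector. The same detour also cites Proposition~\ref{prop:onerect} as doing "the same argument," which it does not. Your final covering argument uses none of this, so delete that passage, including the integral identity built on it and the generic-$x_0$ paragraph.
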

\begin{proof}
By symmetry, we only show the horizontal case.

Let $\gamma$ be a horizontal cycle of $G(T)$.
We have that $\Lambda$ contains a lattice point $(|\gamma|,0)$
where $|\gamma|$ is the total length of edges in $\gamma$,
and hence $|\gamma|=d_x$ by definition.

Next, we construct a closed walk $W$ by following steps.
\begin{enumerate}
\item[(S1)]
Start from any vertex and stop at a repeated vertex.
\item[(S2)]
Traverse a vertical edge upward to the next vertex until no such edge exists.
\item[(S3)]
Traverse a horizontal edge rightward to the next vertex and go to step (S2).
\end{enumerate}
Because $G(T)$ has finitely many vertices,
there is a closed walk $W$ between the first pair of repeated vertices.
By the construction,
even the walk traverses into a horizontal cycle,
it traverses upward before a repeated vertex.
Thus, $W$ contains at least one vertical edge,
and no edge is repeated in $W$.
By Lemma \ref{lem:hperiod},
the total length of vertical edges in $W$ is a multiple of $\frac{d(\Lambda)}{d_x}$.
Together with $\gamma$,
we have $m(T)\ge|\gamma|+\frac{d(\Lambda)}{d_x}= d_x+\frac{d(\Lambda)}{d_x}=m_x(\Lambda)$.
\end{proof}

%============================================================================
\subsection{Skeletons that contain no horizontal/vertical cycle}
%============================================================================

Here, we focus on the tiling with length less than both $m_x(\Lambda)$ and $m_y(\Lambda)$.
A path in $G(T)$ is a {\em maximal horizontal path}
if it consists of horizontal edges only
and not a sub-path of other horizontal paths.
A {\em maximal vertical path} is defined analogously.

%[Perimeter reduction]

%-------------------------------------------------
\begin{lemma}\label{lem:reduce}
If there exists an axis-aligned rectangular tiling $T$ of the flat torus $\mathbb{T}^2=\mathbb{R}^2/\Lambda$
corresponding to a lattice $\Lambda$ with length $m(T)<\min\{m_x(\Lambda)$, $m_y(\Lambda)\}$,
then there exists another axis-aligned rectangular tiling $T'$ of $\mathbb{T}^2$ with length $m(T')\le m(T)$
and skeleton $G(T')$ has both exactly one maximal horizontal path and
exactly one maximal vertical path.
\end{lemma}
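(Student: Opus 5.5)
We plan to argue by extremality: among all axis-aligned rectangular tilings $T'$ with $m(T')\le m(T)$ (hence $m(T') < \min\{m_x(\Lambda),m_y(\Lambda)\}$), take one minimizing the number of maximal horizontal paths plus maximal vertical paths in $G(T')$. Such a $T'$ exists because any reduction step used below strictly decreases this count. By Proposition \ref{prop:axiscyle}, $G(T')$ has neither a horizontal nor a vertical cycle. So every maximal horizontal path is a genuine segment with two distinct endpoints, and likewise every maximal vertical path. At each endpoint of a maximal horizontal path, the path must end in a T-junction against a vertical segment; otherwise the adjacent rectangle corners would not close up. The same holds with the roles of horizontal and vertical swapped. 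We will show that if there are at least two maximal horizontal paths, a segment can be slid and merged without increasing length. By symmetry the vertical case is the same.

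The key step is a sliding argument. Take a maximal horizontal path $h$ at height $y_0$ with endpoints on vertical segments. Consider translating $h$ vertically to height $y_0+t$, stretching or shrinking the vertical segments that end on $h$ from above and from below. Let $a$ be the total length of vertical edges ending at $h$ from above and $b$ from below, counting only the T-junctions hanging from $h$; the endpoints of $h$ lie on through-going vertical segments and do not change. Moving $h$ up by $t$ changes the length by $(b-a)t$. This is linear in $t$, so one direction does not increase $m$. We slide in that direction until $h$ first hits another horizontal segment $h'$ at the same height, or until some rectangle adjacent to $h$ degenerates. Either event collapses an edge and merges $h$ with $h'$ into one horizontal line, or deletes a rectangle, which reduces the number of maximal paths. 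Note that the horizontal extent of $h$ is fixed by the vertical segments carrying its endpoints. Hence $h$ must meet another horizontal segment or a vertex before it can run off, since a rectangle's height is bounded. Also, the tiling stays valid while sliding, because the rectangles above and below only change height.

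We must also check that merging can occur while rectangles stay nondegenerate. Degenerate rectangles of zero height are simply discarded, which removes edges and does not increase length. After merging, if the combined horizontal line closes up into a cycle, we obtain a horizontal cycle with length $\le m(T)<m_x(\Lambda)$, contradicting Proposition \ref{prop:axiscyle}. So the count strictly drops while we stay in the class of tilings with no horizontal or vertical cycle. This contradicts minimality. Hence there is exactly one maximal horizontal path, and symmetrically exactly one maximal vertical path. Sliding a horizontal path does not increase the number of vertical paths, so both reductions can be combined in the potential.

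The main obstacle is the sliding step. The difficulty is to make it rigorous that the slide terminates in a genuine merge or degeneration, and that the intermediate configurations remain valid tilings of the torus. In particular, on the torus a horizontal segment may interact with translates of itself. We must also handle an endpoint of $h$ whose T-junction vertical segment itself ends at $h$, forming a corner, so that $h$ cannot move without breaking the rectangle structure. We will first prove that every endpoint of a maximal path is a proper T-junction, using the absence of cycles and rectangle-corner counting at each vertex. Then we carry out the linear-perturbation argument.
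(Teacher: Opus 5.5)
Your plan is essentially the paper's proof: slide a maximal horizontal path in the length-nonincreasing direction until it merges with another horizontal path, using the number of maximal paths as the decreasing potential. One correction: your $a,b$ must be the \emph{numbers} of vertical edges ending on $h$ from above and from below, not their total lengths; with that fix $b-a$ is exactly the paper's $|S_2|-|S_3|$. The only point you skip is that on the torus a rectangle can have both its top and its bottom on $h$, so it translates rather than changing height. If every rectangle touching $h$ is of this kind, nothing shrinks and ``a rectangle's height is bounded'' gives no termination. The paper's Case~1 handles this: these rectangles are then bounded by vertical cycles unless they cover the whole torus, and in that case $h$ is already the unique maximal horizontal path.
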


\begin{proof}
Clearly $G(T)$ contains no horizontal/vertical cycle; for otherwise,
$m(T)\ge\min\{m_x(\Lambda),m_y(\Lambda)\}$.
Hence $G(T)$ contains horizontal and vertical paths.
Let $H$ be a maximal horizontal path in $G(T)$.
We classify those rectangles in $T$
with a side contained in $H$ into three types.
\begin{itemize}
\itemsep -3pt
\item $S_1$: rectangles both whose top and bottom sides are mapped onto $H$.
\item $S_2$: rectangles only whose top side is mapped onto $H$.
\item $S_3$: rectangles only whose bottom side is mapped onto $H$.
\end{itemize}

By symmetry, we may assume $|S_2|\ge |S_3|$, and consider two cases.

{\bf Case 1.}
$|S_2|=0$.
In this case, $|S_3|=0$.
If there is a rectangle not in $S_1$,
then $G(T)$ contains a vertical cycle
whose edges are formed from the sides
belonging to both one rectangle in $S_1$ and one not in $S_1$.
By Proposition \ref{prop:axiscyle},
we have $m(T)>m_y(\Lambda)$ which gives a contradiction.
So, $S_1$ consists of all rectangles in $T$
and hence $H$ is the only maximal horizontal path.

{\bf Case 2.}
$|S_2|>0$.
Assume that $T$ consist of rectangles $R_i=[a_i,b_i]\times[c_i,d_i]$ for $i=1,2,\ldots,k$,
and let $h=\min\{d_i-c_i: \pi(R_i)\in S_2\}$,
where $\pi$ is the quotient mapping.
Then we can construct a new tiling $T'$ with rectangles:
$$
  R_i'= \left\{\begin{array}{ll}
   {[a_i,b_i] \times [c_i-h,d_i-h]},
                     &$ if $\pi(R_i)\in S_1; \\
   {[a_i,b_i] \times [c_i,d_i-h]},
                     &$ if $\pi(R_i)\in S_2; \\
   {[a_i,b_i] \times [c_i-h,d_i]},
                     &$ if $\pi(R_i)\in S_3; \\
    R_i              & $ otherwise$.
         \end{array}\right.
$$
Since at least one rectangle $R_j'$ is eliminated for which $\pi(R_j)\in S_2$ and $d_j-c_j=h$,
the bottom side of $R_j$ is not mapped onto $H$,
and so $m(T')\leq m(T)-(b_j-a_j)-h(|S_2|-|S_3|)<m(T)$.
Meanwhile, $G(T')$ has fewer maximal horizontal paths than {\color{red} $G(T)$},
and at most as many maximal vertical paths as $G(T)$.

Since $G(T)$ has finitely many horizontal/vertical paths,
with similar argument for vertical paths,
finally we can obtain a tiling $T'$ with length
$m(T')\le m(T)<\min\{m_x(\Lambda),m_y(\Lambda)\}$
and $G(T')$ has both exactly one maximal horizontal path
and exactly one maximal vertical path.
\end{proof}

%[Cross lower bound]

%----------------------------------------------
\begin{proposition}\label{prop:cross}
If $G(T)$ contains no horizontal/vertical cycle,
and has both exactly one maximal horizontal path
and exactly one maximal vertical path,
then $m(T)\ge \normone{u}+\normone{v}$,
where $\{u,v\}$ is a quadrant basis of $\Lambda$.
\end{proposition}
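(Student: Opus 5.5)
The skeleton $G(T)$ consists of exactly two segments on the torus: a maximal horizontal path $H$ of total length $\alpha$ and a maximal vertical path $V$ of total length $\beta$. Since every edge of $G(T)$ lies on one of them, $m(T)=\alpha+\beta$. The plan is to produce two lattice vectors $w_1\in(\Lambda\cap Q_1)\setminus\{0\}$ and $w_2\in\Lambda\cap Q_2$ with $\normone{w_1}+\normone{w_2}\le\alpha+\beta$. The minimality in Lemma \ref{lem:qb} then gives $\normone{u}+\normone{v}\le\normone{w_1}+\normone{w_2}\le m(T)$.

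First I would pin down the combinatorial structure. Each rectangle's four sides lie on $H\cup V$. A horizontal side must lie on $H$ and a vertical side on $V$. Because $H$ is a single straight segment (not a cycle), every rectangle has both its top and bottom sides mapped into $H$, and both its left and right sides mapped into $V$. In particular, every corner of every rectangle is an intersection of $H$ and $V$, hence an endpoint-type vertex where a segment ends on the other. I would argue that the endpoints of $H$ lie on $V$ and vice versa: an endpoint of a maximal path that is not on the other path would be a degree-one vertex, impossible in a rectangular tiling. So, lifting to $\mathbb{R}^2$, $H$ is a segment from $P$ to $P+(\alpha,0)$ and $V$ is a segment from $Q$ to $Q+(0,\beta)$, with each endpoint of one lying on the other.

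Next I would extract the lattice vectors from these incidences. Lift $H$ starting at a point where it meets $V$. The left endpoint $P$ of $H$ lies on some lift of $V$, and the right endpoint $P+(\alpha,0)$ lies on another lift of $V$. These two lifts differ by a lattice vector $\lambda_1=(\alpha,t_1)$ with $|t_1|\le\beta$, since both points lie on vertical segments of length $\beta$ that are translates of each other. Symmetrically, the endpoints of $V$ lie on lifts of $H$ differing by $\lambda_2=(t_2,\beta)$ with $|t_2|\le\alpha$. More precisely, I would track the offset of each incidence point along the other segment. If the left end of $H$ meets $V$ at height $y_1$ from $V$'s bottom and the right end at height $y_2$, then $t_1=y_1-y_2$. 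Similarly for $t_2$ with horizontal offsets $x_1,x_2$. The key constraint comes from the endpoints of $V$ sitting on $H$ and the endpoints of $H$ sitting on $V$, which forces sign relations among these offsets.

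The main obstacle is showing that one can choose, among $\lambda_1$, $\lambda_2$, $\lambda_1\pm\lambda_2$, one vector in $Q_1$ and one in $Q_2$ with $\ell^1$-norms summing to at most $\alpha+\beta$. I would first try a direct case analysis on the signs of $t_1$ and $t_2$.
\begin{itemize}
\item If $t_1$ and $t_2$ have opposite-type quadrants (say $\lambda_1\in Q_1$ and $\lambda_2\in Q_2$), this fails, because $\normone{\lambda_1}+\normone{\lambda_2}=\alpha+\beta+|t_1|+|t_2|$ overshoots.
\item So instead I would use a rectangle-area or unfolding argument. The rectangles tile the torus, so cutting along $H\cup V$ yields a fundamental domain: a staircase-shaped region bounded by lifts of $H$ and $V$. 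Its boundary, traversed once, consists of $H$, $V$ and their translates, each used twice. The translation vectors pairing opposite boundary pieces are lattice vectors.
\end{itemize}
Taking the four ``corner'' vectors between consecutive endpoint incidences, I expect to get vectors of the form $(x,y)$ and $(x',-y')$ with $x+x'=\alpha$ and $y+y'=\beta$, all of $x,x',y,y'\ge0$, mirroring the two-rectangle construction of Proposition \ref{prop:tworect} read backwards. These lie in $Q_1$ and $Q_2$ respectively, and their norms sum to exactly $\alpha+\beta$. The degenerate cases $y'=0$ or $x'=0$ would give a vector on an axis in $Q_1$, which is still admissible since $Q_1$ is closed.

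If the offsets of the two incidences can be arranged so that the decomposition $\alpha=x+x'$, $\beta=y+y'$ with nonnegative parts always exists, which I would verify case by case on the relative order of incidence points along $H$ and $V$, the proposition follows from Lemma \ref{lem:qb}.
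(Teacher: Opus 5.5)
There is a genuine gap. Your overall frame is right: find $w\in(\Lambda\cap Q_1)\setminus\{0\}$ and $w'\in\Lambda\cap Q_2$ with $\normone{w}+\normone{w'}\le\alpha+\beta$, then use the minimality behind Lemma~\ref{lem:qb}. But the proposal never produces such a pair, and neither route you sketch can.

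\textbf{The vectors $\lambda_1,\lambda_2$ are the wrong objects.} $\lambda_1=(\alpha,t_1)$ and $\lambda_2=(t_2,\beta)$ compare the two ends of the same segment. Consider the two-rectangle tiling of Proposition~\ref{prop:tworect} built from $u=(p,q)$, $v=(r,s)$. There $\lambda_1=\pm(u-v)$ and $\lambda_2=\pm(u+v)$, which span only an index-$2$ sublattice. Every $Q_1$/$Q_2$ pair among $\lambda_1,\lambda_2,\lambda_1\pm\lambda_2$ (that is, among $\pm(u\mp v),\pm2u,\pm2v$) has total norm strictly larger than $m(T)=\normone{u}+\normone{v}$.

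\textbf{The fallback expectation is false in general.} You expect a $Q_1$ vector $(x,y)$ and a $Q_2$ vector $(x',-y')$ with $x+x'=\alpha$, $y+y'=\beta$. This fails because $H$ and $V$ may also cross at degree-$4$ vertices; your claim that every corner is a point where one segment ends on the other overlooks this. Here is an example.
\begin{itemize}
\item Let $\Lambda=\mathbb{Z}(3,-3)+\mathbb{Z}(7,4)$, and take the tiles $[0,4]\times[0,3]$, $[4,7]\times[0,3]$, $[7,10]\times[0,4]$.
\item The skeleton is $H=\pi([0,10]\times\{0\})$ together with $V=\pi(\{4\}\times[0,10])$.
\item There are no cycles, since the shortest horizontal and vertical lattice vectors have length $33$. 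So $\alpha=\beta=10$ and $m(T)=20$.
\item $H$ and $V$ cross at $\pi(7,0)=\pi(4,3)$.
\item Every $(x,y)\in\Lambda$ satisfies $11\mid x+y$. Hence a nonzero $(x,y)\in Q_1$ with $0\le x,y\le 10$ has $x+y=11$, so $x'+y'=9$.
\item Meanwhile $11\mid x'-y'$ forces $x'=y'=9/2$, which is impossible since $\Lambda\subset\mathbb{Z}^2$.
\end{itemize}
So the case-by-case verification you defer cannot succeed.

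\textbf{The missing idea is to pair an endpoint of $H$ with an endpoint of $V$.} This is what the paper does.
\begin{itemize}
\item Lift $H$ to $\overline{AB}$ and $V$ to $\overline{CD}$. Let $C',D'\in\overline{AB}$ and $A',B'\in\overline{CD}$ be the interior points identified with $C,D,A,B$.
\item Then $(C'-A)+(A'-C)=(C'-C)+(A'-A)\in\Lambda$. The same construction works for the endpoint pairs $(B,C)$, $(B,D)$, $(A,D)$.
\item Their horizontal and vertical components point right/up, left/up, left/down, right/down, and all are nonzero because the feet are interior points. So the four vectors lie alternately in $Q_1\setminus\{0\}$ and $Q_2$.
\item Since $|AC'|+|C'B|=\alpha$, $|CA'|+|A'D|=\beta$, and so on, their $\ell^1$-norms sum to exactly $2(\alpha+\beta)$.
\item Hence the shorter $Q_1$ vector plus the shorter $Q_2$ vector has norm at most $\alpha+\beta=m(T)$, which is exactly what your plan needs.
\end{itemize}
In the example above these vectors are $(4,7)$, $(-6,6)$, $(-7,-4)$, $(3,-3)$, giving the bound $17\le 20$.
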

\begin{proof}
Let $H=\pi(\overline{AB})$ be the maximal horizontal path,
where $\overline{AB}$ is a horizontal line segment in $\mathbb{R}^2$ with
$A$ the left endpoint and $B$ the right endpoint;
and let $V=\pi(\overline{CD})$ be the maximal vertical path,
where $\overline{CD}$ is a vertical line segment in $\mathbb{R}^2$ with
$C$ the bottom endpoint and $D$ the top endpoint.
In $G(T)$, both $\pi(A)$ and $\pi(B)$ are internal vertices of $V$,
and symmetrically both $\pi(C)$ and $\pi(D)$ are internal vertices of $H$.
Let $A',B'\in\overline{CD}$ be the points with $\pi(A')=\pi(A)$ and $\pi(B')=\pi(B)$;
and similarly let
$C',D'\in\overline{AB}$ be the points with $\pi(C')=\pi(C)$ and $\pi(D')=\pi(D)$.

Consider the following four vectors:
\begin{align*}
  w_1 &= \overrightarrow{AC'}+\overrightarrow{CA'}, &
  w_2 &= \overrightarrow{BC'}+\overrightarrow{CB'}, &
  w_3 &= \overrightarrow{BD'}+\overrightarrow{DB'}, &
  w_4 &= \overrightarrow{AD'}+\overrightarrow{DA'}.
\end{align*}
We have $w_i\in\Lambda$ for $i=1,2,3,4$.
Moreover, $w_1,w_3\in Q_1$ and  $w_2,w_4\in Q_2$, which gives
$\normone{w_1}+\normone{w_2}+\normone{w_3}+\normone{w_4}
\ge 2\normone{u}+ 2\normone{v}$.

By the hypotheses, $m(T)$ is the sum of length of edges in $H$ and $V$. Thus
\begin{eqnarray*}
  m(T) &=&   |\overline{AB}|+|\overline{CD}|\\
       &=&   \frac 12 \Big[ (|\overrightarrow{AC'}|+|\overrightarrow{BC'}|)
             +(|\overrightarrow{AD'}|+|\overrightarrow{BD'}|) \\
       & &   +(|\overrightarrow{CA'}|+|\overrightarrow{DA'}|)
             +(|\overrightarrow{CB'}|+|\overrightarrow{DB'}|) \Big] \\
       &=&   \frac 12 \Big[ (|\overrightarrow{AC'}|+|\overrightarrow{CA'}|)
             +(|\overrightarrow{BC'}|+|\overrightarrow{CB'}|) \\
       & &   +(|\overrightarrow{BD'}|+|\overrightarrow{DB'}|)
             +(|\overrightarrow{AD'}|+|\overrightarrow{DA'}|) \Big] \\
       &=&   \frac12 (\normone{w_1}+\normone{w_2}+\normone{w_3}+\normone{w_4}) \\
       &\ge& \frac12 (2\normone{u}+2 \normone{v})
             = \normone{u}+ \normone{v}.
\end{eqnarray*}
We finish the proof.
\end{proof}

%%%%%%%%%%%%%%%%%%%%%%%%%%%%%%%%%%%%%%%%%%%%%%%%%%%%%%%%%%%%%%%%%%%%%%%%%%%%%%%
\section{Minimum length of an axis-aligned rectangular tiling}
%%%%%%%%%%%%%%%%%%%%%%%%%%%%%%%%%%%%%%%%%%%%%%%%%%%%%%%%%%%%%%%%%%%%%%%%%%%%%%%

Now we give a lower bound and prove that it is attainable.

%[Minimum skeleton length]

%-------------------------------------
\begin{theorem}\label{thm:main}
Let $\mathbb{T}^2=\mathbb{R}^2/\Lambda$ be the flat torus
corresponding to a lattice $\Lambda$ generated by a basis of $\mathbb{R}^2$.
The minimum length of an axis-aligned rectangular tiling of $\mathbb{T}^2$ is
$$
     m(\Lambda)=\min\left\{\normone{u}+\normone{v}, m_x(\Lambda), m_y(\Lambda)\right\},
$$
where $\{u,v\}$ is a quadrant basis of $\Lambda$.
\end{theorem}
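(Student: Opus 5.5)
The proof splits into an upper bound, obtained from the constructions of Section 3, and a matching lower bound, obtained from the results of Section 4. Write $M=\min\{\normone{u}+\normone{v}, m_x(\Lambda), m_y(\Lambda)\}$.

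\textbf{Upper bound} $m(\Lambda)\le M$. If $m_x(\Lambda)<\infty$, Proposition~\ref{prop:onerect} gives a one-rectangle tiling of length $m_x(\Lambda)$, and likewise for $m_y(\Lambda)$. For the quadrant basis $u=(p,q)\in Q_1$, $v=(r,s)\in Q_2$, we have $rs<0$ automatically, but Proposition~\ref{prop:tworect} also requires $pq>0$. If $pq=0$, say $q=0$, then $u=(p,0)\in\Lambda$ and $m_x(\Lambda)<\infty$. Minimality of $u$ in $\ell^1$-norm over $(\Lambda\cap Q_1)\setminus\{0\}$ gives $|p|=d_x$, since $(d_x,0)\in Q_1$. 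Because $\{u,v\}$ is a $\mathbb{Z}$-basis (Lemma~\ref{lem:qb}), $d(\Lambda)=|ps|=d_x|s|$, so $|s|=d(\Lambda)/d_x$. Hence
$$m_x(\Lambda)=d_x+|s|\le \normone{u}+\normone{v},$$
and the minimum $M$ is attained by the one-rectangle tiling. The case $p=0$ is symmetric, using $m_y(\Lambda)$ and $|r|$. If $pq>0$, Proposition~\ref{prop:tworect} applied to the basis $\{u,v\}$ (which is a basis by Lemma~\ref{lem:qb}) gives a two-rectangle tiling of length $\normone{u}+\normone{v}$. In all cases $m(\Lambda)\le M$, and the minimum is attained by one or two rectangles.

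\textbf{Lower bound} $m(T)\ge M$ for every tiling $T$. Suppose instead $m(T)<M$. In particular $m(T)<\min\{m_x(\Lambda),m_y(\Lambda)\}$, so Lemma~\ref{lem:reduce} yields a tiling $T'$ with $m(T')\le m(T)$ whose skeleton has exactly one maximal horizontal path and exactly one maximal vertical path. Since $m(T')<\min\{m_x(\Lambda),m_y(\Lambda)\}$, Proposition~\ref{prop:axiscyle} shows $G(T')$ contains no horizontal or vertical cycle. Then Proposition~\ref{prop:cross} gives $m(T')\ge\normone{u}+\normone{v}\ge M$, contradicting $m(T')\le m(T)<M$. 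Therefore $m(T)\ge M$, and together with the upper bound, $m(\Lambda)=M$.

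The only delicate step is the degenerate case $pq=0$ in the upper bound, where Proposition~\ref{prop:tworect} does not apply. It needs the verification above that $m_x(\Lambda)$ (or $m_y(\Lambda)$) is then at most $\normone{u}+\normone{v}$, which relies on the basis property to identify $|s|$ with $d(\Lambda)/d_x$. Everything else is assembling the earlier results.
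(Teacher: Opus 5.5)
Your proposal is correct and follows essentially the same route as the paper. The lower bound comes from Lemma~\ref{lem:reduce}, Proposition~\ref{prop:axiscyle} and Proposition~\ref{prop:cross}, and the upper bound from Propositions~\ref{prop:onerect} and~\ref{prop:tworect}, with the degenerate case $pq=0$ handled as in the paper by identifying $|p|=d_x$ and $|s|=d(\Lambda)/d_x$ (or the $y$-analogue). The differences are cosmetic: you make explicit the appeal to Proposition~\ref{prop:axiscyle} to rule out horizontal/vertical cycles in $G(T')$, and you phrase the degenerate case as $m_x(\Lambda)\le\normone{u}+\normone{v}$ rather than as a contradiction with $\normone{u}+\normone{v}<\min\{m_x(\Lambda),m_y(\Lambda)\}$.
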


\begin{proof}
For a tiling $T$, if $m(T)<\min\{m_x(\Lambda),m_y(\Lambda)\}$,
then by Lemma \ref{lem:reduce}, there exists a tiling $T'$
with $m(T')\le m(T)$ and $G(T')$ contains no horizontal/vertical cycle,
and has both exactly one maximal horizontal path
and exactly one maximal vertical path.
So Proposition~\ref{prop:cross} yields $m(T')\ge \normone{u}+\normone{v}$.
Hence
$m(T)\ge \min\left\{\normone{u}+\normone{v}, m_x(\Lambda), m_y(\Lambda)\right\}$.

If $m_x(\Lambda)$ (respectively, $m_y(\Lambda)$) is finite,
then Proposition~\ref{prop:onerect}
provides a tiling $T$ with one rectangle and length $m(T)=m_x(\Lambda)$ (respectively, $m_y(\Lambda)$).
When $\normone{u}+\normone{v}<\min\{m_x(\Lambda),m_y(\Lambda)\}$,
let $u=(p,q)$ and $v=(r,s)$ with $pq\ge0$ and $rs<0$.
If $p=0$ (respectively, $q=0$), then
$|q|=d_y$ (respectively, $|p|=d_x$)
and $|r|=\frac{d(\Lambda)}{d_y}$ (respectively, $|s|=\frac{d(\Lambda)}{d_x}$),
so $\normone{u}+\normone{v}=|p|+|q|+|r|+|s| > m_y(\Lambda)$ (respectively, $m_x(\Lambda)$).
So we have $pq>0$, and
Proposition~\ref{prop:tworect} gives a tiling $T$ with two rectangles
and length $m(T)=\normone{u}+\normone{v}$.

Hence $m(\Lambda)=\min\left\{\normone{u}+\normone{v}, m_x(\Lambda), m_y(\Lambda)\right\}.$
\end{proof}

\section{Computation remarks}

Let $\{u,v\}$ be any $\mathbb{Z}$-basis of $\Lambda$ with $u=(p,q)$ and $v=(r,s)$.

%----------------------------------------------------------------
\begin{remark}[Computing $m_x(\Lambda)$ and $m_y(\Lambda)$]

There exists $(a,0)\in\Lambda$ with $a\ne 0$
(equivalently, $m_x(\Lambda)<\infty$)
if and only if $\frac{q}{s}$ or $\frac{s}{q}$ is rational.
In that case $\frac{d(\Lambda)}{d_x}=\gcd(q,s)$
can be obtained by the Euclidean algorithm, and
$$
    m_x(\Lambda)=d_x+\frac{d(\Lambda)}{d_x}
    =\frac {|ps-qr|}{\gcd(q,s)}+\gcd(q,s).
$$
Analogously $m_y(\Lambda)=\frac {|ps-qr|}{\gcd(p,r)}+\gcd(p,r)$ is obtained.
\end{remark}

%-----------------------------------------------------------------------
\begin{remark}[Finding a quadrant basis of $\Lambda$]

For any lattice $\Lambda\subset\mathbb{R}^2$,
both $(\Lambda\cap Q_1)\setminus\{0\}$ and $\Lambda\cap Q_2$ are nonempty.
For computation, write $\Lambda = B\mathbb{Z}^2$,
where $B = \begin{pmatrix} p & r \\ q & s \end{pmatrix}$.
For any $z\in\mathbb{Z}^2$, set $x=Bz\in\Lambda$.
Then
$$
   \normone{z}
 = \normone{B^{-1}x}  \le \normone{B^{-1}}\normone{x},
   \qquad
   \normone{B^{-1}}=\frac{\max\{|s|+|q|,\ |r|+|p|\}}{|ps-qr|}.
$$
Hence for any radius $r>0$, every lattice point $x\in\Lambda$ with $\normone{x}\le r$
arises from an integer vector $z$ with $\normone{z}\le\normone{B^{-1}} r$,
so there are only finitely many candidates to check.
Therefore one can find a quadrant basis
by enumerating $z\in\mathbb{Z}^2$ in increasing $\normone{z}$ and keeping the best candidate in each $Q_1$ or $Q_2$.
\end{remark}

\end{document}